\definecolor{mblue}{rgb}{0,0,.8}
\newcommand{\N}{\mathbb N}
\newcommand{\Z}{\mathbb Z}
\newcommand{\Q}{\mathbb Q}
\newcommand{\F}{\mathbb F}
\newcommand{\C}{\mathbb C}
\newcommand{\p}{\mathfrak p}
\newcommand{\Zmod}[1]{\overline{\Z/{#1}\Z}}
\newcommand{\Zmodpm}[1]{\Z/{#1}\Z}
\newcommand{\diam}[1]{\left< #1 \right>}
\newcommand{\abcd}[4]{\left(\begin{smallmatrix}#1&#2\\#3&#4\end{smallmatrix}\right)}
\newtheorem{thm}{Theorem}
\newtheorem{lem}{Lemma}
\newtheorem{remark}{Remark}
\newtheorem{prop}{Proposition}
\newtheorem{cor}{Corollary}
\DeclareMathOperator{\GL}{GL} \DeclareMathOperator{\SL}{SL}  
  \DeclareMathOperator{\Tr}{tr} 
\def\dash---{\thinspace---\hskip.16667em\relax}
\begin{document}

\title{On the theta operator for modular forms modulo prime powers}
\author[Imin Chen, Ian Kiming]{Imin Chen, Ian Kiming}
\address[Imin Chen]{Department of Mathematics, Simon Fraser University, 8888 University Drive, Burnaby, B.C., V5A 1S6, Canada}
\email{ichen@math.sfu.ca}
\address[Ian Kiming]{Department of Mathematical Sciences, University of Copenhagen, Universitetsparken 5, 2100 Copenhagen \O , Denmark}
\email{kiming@math.ku.dk}

\begin{abstract} We consider the classical theta operator $\theta$ on modular forms modulo $p^m$ and level $N$ prime to $p$ where $p$ is a prime greater than $3$. Our main result is that $\theta$ mod $p^m$ will map forms of weight $k$ to forms of weight $k+2+2p^{m-1}(p-1)$ and that this weight is optimal in certain cases when $m$ is at least $2$. Thus, the natural expectation that $\theta$ mod $p^m$ should map to weight $k+2+p^{m-1}(p-1)$ is shown to be false.

The primary motivation for this study is that application of the $\theta$ operator on eigenforms mod $p^m$ corresponds to twisting the attached Galois representations with the cyclotomic character. Our construction of the $\theta$-operator mod $p^m$ gives an explicit weight bound on the twist of a modular mod $p^m$ Galois representation by the cyclotomic character.
\end{abstract}

\maketitle

\section{Introduction} \label{intro} Let $p$ be a prime number. We shall assume $p\ge 5$ throughout the paper in order to avoid certain technicalities when $p$ is $2$ or $3$. Also, we fix a natural number $N$ that will be assumed throughout to be not divisible by $p$.

Let $\Z_p$ be the ring of $p$-adic integers, $\Q_p$ the field of $p$-adic numbers, and $\C_p$ the completion of an algebraic closure of $\Q_p$. Let $\overline{\Q}$ be an algebraic closure of $\Q$. We fix embeddings of $\overline{\Q}$ into $\C$ and $\C_p$ and use this to freely make comparisons of elements between these fields.

Further let $m \in \N$ and denote by $M_k(N,\Z_p)$ the $\Z_p$-module of modular forms of weight $k$ on $\Gamma_1(N)$ over $\Z_p$ and let $M_k(N,\Zmodpm{p^m})$ be the $\Z/p^m\Z$-module of modular forms on $\Gamma_1(N)$ over $\Z/p^m\Z$ as defined classically by $M_k(N,R) = M_k(N,\Z) \otimes R$. Note that this definition relies on the existence of an integral structure on $M_k(N,\C)$ (see for instance \cite{ckw}). Thus, as we are working with this classical interpretation of `modular forms with coefficients in $R$', we are able to switch effortlessly between forms with coefficients in $\Z$ or $\Z_p$, and those with coefficients in $\Z/p^m\Z$: for forms with coefficients in $\Z$ or $\Z_p$ reduction modulo $p^m$ gives a form with coefficients in $\Z/p^m\Z$, and any such can be lifted to a form with coefficients in $\Z$ (and hence also to one with coefficients in $\Z_p$). We also denote by $S_k(N,R)$ the module of cusp forms of weight $k$ on $\Gamma_1(N)$ and coefficients in $R$, again in the classical sense.

Let $k_1,\ldots,k_t$ be a collection of weights and let $f_i \in M_{k_i}(N,\Z_p)$. By `$q$-expansion' we shall always mean `$q$-expansion at $\infty$'. The $q$-expansion of an element in a direct sum of the $M_{k_i}(N,\Z_p)$'s or $M_{k_i}(N,\Zmodpm{p^m})$'s is defined by extending linearly on each component. When we write $f_1 + \ldots + f_t \equiv 0 \pmod{p^m}$, we shall mean that the $q$-expansion $f_1(q) + \ldots + f_t(q)$ lies in $p^m \Z_p[[q]]$. Similarly for $f_i \in M_{k_i}(N,\Zmodpm{p^m})$, we write $f_1 + \ldots + f_t \equiv 0 \pmod{p^m}$ if the $q$-expansion of $f_1 + \ldots + f_t$ equals $0$ in $(\Z/p^m\Z)[[q]]$. In such a case, we say that $f_1 + \ldots + f_t$ is congruent to $0$ modulo $p^m$.

Let us recall the definition and basic properties of the standard Eisenstein series on $\SL_2(\Z)$, cf.\ \S 1 of \cite{serre_zeta}, for instance: the series
$$
G_k := -\frac{B_k}{2k} + \sum_{n=1}^{\infty} \sigma_{k-1}(n) q^n
$$
with $B_k$ the $k$-th Bernoulli number and $\sigma_t(n) := \sum_{d\mid n} d^t$ the usual divisor sum, is (with $q:=e^{2\pi iz})$ for $k$ an even integer $\ge 4$ a modular form on $\SL_2(\Z)$. Defining $E_k$ as the normalization
$$
E_k := -\frac{2k}{B_k} \cdot G_k
$$
one has $E_k \equiv 1 \pmod{p^t}$ when (and only when) $k\equiv 0 \pmod{p^{t-1}(p-1)}$.

There are natural inclusions (preserving $q$-expansions)
\begin{align*}
  M_k(N,\Zmodpm{p^m}) \hookrightarrow M_{k+p^{m-1}(p-1)}(N,\Zmodpm{p^m}),
\end{align*}
induced by multiplication by $E_{p-1}^{p^{m-1}}$, using the fact that $E_{p-1}^{p^{m-1}} \equiv 1 \pmod {p^m}$. Note that $E_{p^{m-1}(p-1)} = E_{p-1}^{p^{m-1}}$ in $M_k(N,\Zmodpm{p^m})$ (as both forms have $q$-expansions which are congruent modulo $p^m$) so that the map can also be seen as induced by multiplication by $E_{p^{m-1}(p-1)}$.

As is well-known, when we specialize the above series for $G_k$ to $k=2$ and define
$$
G_2 := -\frac{B_2}{4} + \sum_{n=1}^{\infty} \sigma_1(n) q^n = -\frac{1}{24} + \sum_{n=1}^{\infty} \sigma_1(n) q^n
$$
then $G_2$ does not represent a modular form in the usual sense, but does so in the $p$-adic sense, cf.\ \cite{serre_zeta}, \S 2. One defines $E_2$ as the normalization of $G_2$, i.e., $E_2 := -24 G_2$. Thus, $E_2$ is also a $p$-adic modular form.

Consider the classical theta operator $\theta f = \frac{k}{12} E_2 + \frac{1}{12} \partial f$ of Ramanujan. Its effect on $q$-expansions is $\sum_n a_n q^n \mapsto \sum_n n a_n q^n$. Since $E_2$ is a $p$-adic modular form it is for any $m\in\N$ congruent modulo $p^m$ to a classical modular form of some weight. Thus we have $E_2 \equiv E_{p+1} \pmod{p}$, for example, and since we classically know that $\partial$ maps modular forms of weight $k$ to modular forms of weight $k+2$, one obtains the classical operator $\theta$ that maps $M_k(N,\F_p)$ to $M_{k+p+1}(N,\F_p)$. Studying this operator as well as its interaction with the `weight filtration' (see below) is a key tool in the theory of modular forms modulo $p$; cf.\ for instance Jochnowitz' proof of finiteness of systems of Hecke eigenvalues mod $p$ across all weights in \cite{jochnowitz}, or Edixhoven's results on the optimal weight in Serre's conjectures \cite{edix}.

We have launched a framework for the study of modular forms mod $p^m$ in \cite{ckr} and \cite{ckw}. Notice that Serre shows in \cite[Th\'{e}or\`{e}me 5]{serre_zeta} that there exists a $\theta$ operator on $p$-adic modular forms (of level $1$, however the arguments generalize to levels $N$ not divisible by $p$) whose effect on $q$-expansions is $\sum_n a_n q^n \mapsto \sum_n n a_n q^n$ and that sends a form of ($p$-adic) weight $k$ to a form of weight $k+2$. This result immediately implies the existence of a $\theta$ operator modulo $p^m$ for modular forms on $\Gamma_1(N)$.

We have discussed in \cite{ckw} how to attach Galois representations to eigenforms mod $p^m$, and it is clear from the properties of those attached representations that applying the $\theta$ operator corresponds on the Galois side to twisting by the cyclotomic character mod $p^m$. From this perspective, it seems natural to ask about the finer properties of the action of the $\theta$ operator modulo $p^m$, such as how the action of $\theta$ modulo $p^m$  changes weight filtrations.

Our results show that the interplay of the $\theta$ operator with the weights of the forms becomes much more complicated when $m>1$ and that, in fact, there are certain genuine qualitative differences between the case $m=1$ and the general cases $m>1$. Let us explain in detail.

We first show that the $\theta$ operator on modular forms mod $p^m$ maps
$$
M_k(N,\Zmodpm{p^m}) \rightarrow M_{k+k(m)}(N,\Zmodpm{p^m})
$$
where $k(m) := 2 + 2 p^{m-1}(p-1)$. The fact that the effect of $\theta$ on $q$-expansions is $\sum_n a_n q^n \mapsto \sum_n n a_n q^n$, and that $\theta$ satisfies simple commutation rules with Hecke operators $T_{\ell}$ for primes $\ell \neq p$, cf.\ the first two parts of Theorem \ref{theta-single-wt} below, follow already from Serre's theory of the $p$-adic $\theta$ operator, but we will give self-contained proofs working purely in the mod $p^m$ setting, and for modular forms on $\Gamma_1(N)$.

The proof of the above properties use a number of results from \cite{serre_zeta} plus the observation that $f \mid V \equiv f^p \pmod p$ where $V$ is the classical $V$ operator.

Define the weight $w_{p^m}(f)$ of a modular form $f$ mod $p^m$ with $f \not \equiv 0 \pmod p$ to be the smallest $k \in \Z$ such that $f$ is congruent modulo $p^m$ to an element of $M_k(N,\Zmodpm{p^m})$. A classical fact, crucial for instance in the work \cite{jochnowitz}, is that when $m=1$ we have $w_p(\theta f) \le w_p(f) + p + 1$ with equality if (and only if) $p \nmid w_p(f)$.

One might expect the generalization of this to be that $w_{p^m}(\theta f) \le w_{p^m}(f) + 2 + p^{m-1}(p-1)$ (perhaps with equality in some cases). However, as the third part of Theorem \ref{theta-single-wt} shows, this is false:

\begin{thm}\label{theta-single-wt} Let $p \ge 5$ be a prime. Put
$$
k(m) := 2 + 2 p^{m-1}(p-1).
$$

(i) The classical theta operator $\theta$ induces an operator
$$
\theta : M_k(N,\Zmodpm{p^m}) \rightarrow M_{k + k(m)}(N,\Zmodpm{p^m})
$$
whose effect on $q$-expansions is $\sum a_n q^n \mapsto \sum n a_n q^n$. The operator preserves cusp forms.
\smallskip

(ii) If $\ell \not= p$ is a prime and $T_{\ell}$ denotes the $\ell$-th Hecke operator, then
$$
T_{\ell} \theta = \ell \cdot \theta T_{\ell}
$$
as linear maps $M_k(N,\Zmodpm{p^m}) \rightarrow M_{k + k(m)}(N,\Zmodpm{p^m})$. Furthermore, if $\ell = p$ and $k \ge m$, we still have the identity above.
\smallskip

\noindent (iii) Let $m\ge 2$ and $f \in M_k(N,\Zmodpm{p^m})$ with $f \not\equiv 0 \pmod p$. Suppose further that $p\nmid k$ and $w_p(f)= k$. Then
$$
w_{p^m}(\theta f) = k + 2 + 2 p^{m-1} (p-1).
$$
\end{thm}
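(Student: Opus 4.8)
The plan is to establish the lower bound $w_{p^m}(\theta f) \ge k+2+2p^{m-1}(p-1)$, the matching upper bound being immediate from part~(i). First I would pin down the reduction modulo $p$: since $w_p(f)=k$ and $p\nmid k$, the classical mod~$p$ theta operator gives $w_p(\theta f)=k+p+1$, so in particular $\theta f\not\equiv 0\pmod p$ (hence $w_{p^m}(\theta f)$ is defined) and $w_{p^m}(\theta f)\ge w_p(\theta f)=k+p+1$. Because the $q$-expansion of $\theta f$ is fixed, every weight in which $\theta f$ can be realised mod $p^m$ is $\equiv k+2\pmod{p-1}$, and, using the inclusions induced by $E_{p-1}^{p^{m-1}}$, these weights are spaced by $p^{m-1}(p-1)$ above the filtration. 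Combining this with $k+p+1\le w_{p^m}(\theta f)\le k+2+2p^{m-1}(p-1)$, the only possibility left to exclude is $w_{p^m}(\theta f)=W'$, where $W':=k+2+p^{m-1}(p-1)$; everything then comes down to showing that $\theta f$ is \emph{not} congruent mod $p^m$ to a modular form of weight $W'$.

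Next I would transfer the problem to the Eisenstein series $E_2$. Writing $\theta f=\tfrac1{12}\partial f+\tfrac{k}{12}E_2 f$ with $\partial f\in M_{k+2}(N,\Z_p)$ and noting that $k+2\equiv W'\pmod{p^{m-1}(p-1)}$, multiplication by a suitable power of $E_{p-1}^{p^{m-1}}$ lifts $\partial f$ into weight $W'$ without changing it mod $p^m$; hence $\theta f$ is realisable in weight $W'$ if and only if $E_2 f$ is. The key input is then an explicit description of $E_2$ mod $p^m$. Comparing $q$-expansions and using the Kummer congruences for $-2\kappa/B_\kappa$ together with the fact that the $p$-part of $\sigma_{\kappa-1}(n)$ is divisible by a high power of $p$, one obtains $E_2^{(p)}\equiv E_{2+p^{m-1}(p-1)}\pmod{p^m}$, where $E_2^{(p)}:=E_2-p\,(E_2|V)$ is the classical ($\Gamma_0(p)$) stabilisation. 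Thus $E_2=E_2^{(p)}+p\,(E_2|V)$, the first summand already sits in weight $2+p^{m-1}(p-1)$ mod $p^m$, so after multiplying by $f$ one sees that $E_2 f$ is realisable in weight $W'$ if and only if the correction $p\,(E_2|V)f$ is; equivalently, if and only if $(E_2|V)f$ is realisable in weight $W'$ modulo $p^{m-1}$.

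The heart of the argument is to show that this last statement fails, and here the relation $g|V\equiv g^p\pmod p$ is decisive: it means that $V$ multiplies mod~$p$ weights by $p$. Reducing mod $p$ we have $(E_2|V)f\equiv \overline{E_2}^{\,p}\,\bar f$, and since $w_p(\overline{E_2})=p+1$ the factor $\overline{E_2}^{\,p}$ has filtration $p(p+1)$; as $w_p(\bar f)=k$ the Hasse invariant $E_{p-1}$ does not divide $\bar f$, which I would use to show that the product suffers no large drop in filtration, so that $w_p((E_2|V)f)$ exceeds $W'$ already for $m=2$. For general $m$ I would run an induction on $m$: repeating the decomposition $E_2=E_2^{(p)}+p\,(E_2|V)$ (and exploiting the identity $U_p((E_2|V)f)=E_2\cdot U_p f$) reduces the realisability of $(E_2|V)f$ in weight $W'$ modulo $p^{m-1}$ to the analogous statement one level lower, with the relevant weight again raised by a factor governed by $V$, which is precisely what produces the second copy of $p^{m-1}(p-1)$ in $k(m)$.

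The main obstacle is the precise bookkeeping of weight filtrations under $V$ and under multiplication by $f$: one must bound from below the filtration of the Frobenius-type products such as $\overline{E_2}^{\,p}\,\bar f$ (and their refinements mod $p^{m-1}$), i.e.\ control how far below $p(p+1)+k$ the filtration can drop, and it is exactly here that the hypotheses $p\nmid k$ and $w_p(f)=k$ (equivalently $E_{p-1}\nmid\bar f$) are essential and that the assumption $m\ge 2$ is used. A secondary technical point to nail down is the structural fact that the weights realising a fixed mod $p^m$ form are spaced by $p^{m-1}(p-1)$, which underlies the reduction to the single weight $W'$, together with the normalisation congruence $-2\kappa/B_\kappa\equiv -24\pmod{p^m}$ needed for the description of $E_2$ mod $p^m$.
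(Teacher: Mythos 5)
Your outline for part (iii) is, in essence, the paper's own strategy: reduce (via the $p^{m-1}(p-1)$-spacing of realizable weights, i.e.\ \cite[Corollary 22]{ckw}) to excluding the single weight $W'=k+2+p^{m-1}(p-1)$; use the $p$-stabilization $E_2-p\,(E_2\mid V)$, which by Kummer-type congruences is congruent mod $p^m$ to a classical form of weight $2+p^{m-1}(p-1)$ (the paper's first lemma, for $G_2$); and derive the contradiction from the mod-$p$ filtration of $E_{p+1}^{p^{m-1}}\bar f$. For $m=2$ your argument is essentially the paper's. But two genuine gaps remain. First, the technical heart is the filtration statement $w_p(E_{p+1}^{a}\bar f)=k+a(p+1)$, which you only gesture at (``I would use $E_{p-1}\nmid\bar f$ to show the product suffers no large drop''). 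This is not a formality: a drop in filtration means precisely that $E_{p-1}$ divides $E_{p+1}^{a}\bar f$ in the graded ring of mod-$p$ forms of level $N$, and one cannot pass from this to $E_{p-1}\mid\bar f$ by ring-theoretic generalities. One needs the geometric inputs of the paper's Lemma~\ref{lemma:filtration_e_p+1_phi}: Katz's result that $E_{p-1}$ and $E_{p+1}$ have no common zero, Igusa's theorem that $E_{p-1}$ has only simple zeros, and (for $N\le 4$) a trace/descent argument to reduce to $N\ge 5$ where Katz forms of the relevant weights are classical. Without this, even your $m=2$ case is incomplete.

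Second, your treatment of $m\ge 3$ does not work as described. Iterating $E_2=E_2^{(p)}+p\,(E_2\mid V)$ produces, mod $p^{m-1}$, intermediate terms $p^{j}\bigl(E_2^{(p)}\mid V^{j+1}\bigr)f$ for $0\le j\le m-3$ in addition to the top term $p^{m-2}E_{p+1}^{p^{m-1}}f$; these intermediate terms are not classical forms, and converting $g\mid V$ into a classical form of controlled weight is exactly what requires the Fricke-involution/trace argument of the paper's Lemma~\ref{lemma:V-operator}. Nothing in your plan supplies this, and it is where the explicit weights $k_j$ of Proposition~\ref{prop:G_2_modpm} and the crucial inequalities (all intermediate weights lie strictly below $p^{m-1}(p+1)$, so that the top term controls the filtration) come from. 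Moreover, your proposed induction ``to the analogous statement one level lower'' runs in the wrong direction: mod $p^{m-2}$, forms of weight $k+2+p^{m-2}(p-1)$ embed into weight $W'$ via powers of $E_{p-1}^{p^{m-3}}$, so non-realizability in the lower weight does not preclude realizability in weight $W'$; and the identity $U_p((E_2\mid V)f)=E_2\cdot U_p f$ cannot drive an induction, since the hypotheses $p\nmid k$ and $w_p(f)=k$ are not inherited by $U_p f$. Two further points: parts (i) and (ii) are never proved, although you invoke (i) for the upper bound\dash---in the paper, (i) itself requires the full decomposition of $G_2$ mod $p^m$; and your congruence $E_2-p\,(E_2\mid V)\equiv E_{2+p^{m-1}(p-1)}\pmod{p^m}$ is false as stated (the constant terms are $1-p$ and $1$), the correct version carrying the unit $1-p$ (or being stated for $G_2$), which is harmless for the argument but needs fixing.
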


The proof of the third part of the Theorem uses a result of Katz: if we consider the Eisenstein series $E_{p-1}$ and $E_{p+1}$ as modular forms modulo $p$ in the sense of Katz then $E_{p-1}$ (Hasse invariant) has no zero in common with $E_{p+1}$; cf.\ \cite[Remark on p.\ 57]{katz_modp}, or \cite[Theorem 3.1]{katz_zannier}. This point allows one to compute the weight filtration of the last term of the expression below for the $\theta$ operator mod $p^m$, which is the controlling term. We note that, in the case $N=1$ and $m=2$, it is possible to give more complete results on the effect of $\theta \mod p^m$ on weight filtrations.

We would like to remark that the first two parts of Theorem \ref{theta-single-wt} hold with the same proofs even if one allows the level $N$ to be divisible by $p$. However, we have chosen to assume $p\nmid N$ throughout for the following reasons: first, we use this in the third and most important part of Theorem \ref{theta-single-wt}. Secondly, if one allows $p\mid N$ many of the questions we discuss can become easier; for instance, if $p^m \mid N$, then $\theta$ mod $p^m$ raises the weight by at most $2$ simply because $E_2$ is then congruent modulo $p^m$ to a modular form of weight $2$ on $\Gamma_0(p^m)$ (namely, $E_2(z) - p^m E_2(p^m z)$). Thirdly, in another sense matters can become more complicated if we allow $p$ to divide $N$. For instance, there may then exist a form modulo $p^m$ that is $0$ in our sense, i.e., with $p$-integral $q$-expansion around $\infty$ that is term-wise congruent to $0$ modulo $p^m$, but at the same time such that it has a $p$-integral expansion not identically $0$ modulo $p^m$ around some other cusp. For a concrete example of this phenomenon, consider the classical Eisenstein series
$$
F(z) = \sum_{n=1}^{\infty} \left( \sum_{d\mid n} \chi(n/d) d^2 \right) e^{2\pi inz}
$$
where $\chi$ is the Dirichlet character corresponding to $\Q(\sqrt{-1})/\Q$. Then $F$ is a weight $3$ form on $\Gamma_0(4)$ with nebentypus $\chi$. If now $p$ is an odd prime and we define the form $f$ by $f(z) := p^3 F(pz)$ then $f$ is a form of weight $3$ on $\Gamma_1(4p)$ and one can verify that $f$ has $p$-integral Fourier expansion around every cusp and that the expansion around the cusp $1$ begins with a $p$-adic unit. But of course, the expansion around $\infty$ is term-wise identically $0$ modulo $p^3$. (Details of the verification can be found in \cite{kiming_asymp}, proof of Theorem 1.)
\smallskip

For simplicity, we have stated results in this paper for modular forms with coefficients in $\Z_p$ and hence reductions with coefficients in $\Zmodpm{p^m}$. The above theorem however is valid for coefficients in $\Zmod{p^m}$ (see \cite[\S 2.2]{taixes-wiese} or \cite[Section 1]{ckw} for a definition of this ring) using essentially the same proofs.

An immediate consequence of Theorem~\ref{theta-single-wt} to Galois representations is the following. We use the notation and terminology from \cite{ckw}.
\begin{cor}
\label{main-cor}
Let $p \ge 5$ be a prime, $\rho : G_\Q \rightarrow \GL_2(\Zmod{p^m})$ be a residually absolutely irreducible Galois representation, and $\chi :G_\Q \rightarrow (\Zmodpm{p^m})^\times$ be the reduction modulo $p^m$ of the $p$-adic cyclotomic character. Suppose $\rho \cong \rho_f$ for some weak eigenform $f \in S_k(N)(\Zmod{p^m})$. Then $\rho \otimes \chi \cong \rho_{g}$ for some weak eigenform $g \in S_{k + k(m)}(N)(\Zmod{p^m})$.
\end{cor}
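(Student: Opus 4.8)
The plan is to produce $g$ by applying the theta operator to $f$ and then to match the Galois representation attached to $g$ with the twist $\rho\otimes\chi$ on traces of Frobenius elements. So I would set $g:=\theta f$. By part (i) of Theorem~\ref{theta-single-wt}, in the version with coefficients in $\Zmod{p^m}$ recorded in the remark following the theorem, $g$ lies in $M_{k+k(m)}(N)(\Zmod{p^m})$; since $\theta$ multiplies the $n$-th $q$-expansion coefficient by $n$ it kills constant terms and preserves cuspidality, so in fact $g\in S_{k+k(m)}(N)(\Zmod{p^m})$.

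Next I would read off the Hecke eigenvalues of $g$. Writing $T_\ell f = a_\ell f$ for primes $\ell\nmid Np$, part (ii) of Theorem~\ref{theta-single-wt} gives
$$
T_\ell g = T_\ell\theta f = \ell\cdot\theta T_\ell f = \ell a_\ell\cdot\theta f = \ell a_\ell\, g,
$$
so $g$ is again a weak eigenform, now with $T_\ell$-eigenvalue $\ell a_\ell$. This is exactly a twist by $\chi$ on the automorphic side: for $\ell\nmid Np$ one has $\chi(\mathrm{Frob}_\ell)=\ell$ and $\Tr\rho_f(\mathrm{Frob}_\ell)=a_\ell$, so $\ell a_\ell = \Tr\big((\rho_f\otimes\chi)(\mathrm{Frob}_\ell)\big)$. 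Before invoking \cite{ckw} I would check that $g\not\equiv 0\pmod p$: normalizing $f$ so that its coefficient of $q$ equals $1$, the coefficient of $q$ in $g=\theta f$ is again $1$. The residual representation of $g$ is then $\bar\rho\otimes\bar\chi$, which is absolutely irreducible because $\bar\rho$ is, so $g$ meets the hypotheses under which \cite{ckw} attaches a representation $\rho_g\colon G_\Q\to\GL_2(\Zmod{p^m})$ satisfying $\Tr\rho_g(\mathrm{Frob}_\ell)=\ell a_\ell$ for $\ell\nmid Np$.

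The final, and only genuinely delicate, step would be to pass from equality of traces to isomorphism of representations. The representations $\rho_g$ and $\rho_f\otimes\chi$ have the same trace on every $\mathrm{Frob}_\ell$ with $\ell\nmid Np$, hence, by continuity and the Chebotarev density theorem, the same trace on all of $G_\Q$; and because $p\geq 5$ the determinant of a two-dimensional representation is recovered from traces via $\det(\cdot)=\tfrac12\big(\Tr(\cdot)^2-\Tr((\cdot)^2)\big)$, so the two define the same $2$-dimensional pseudo-representation. The hard part is that this only forces an isomorphism over the Artinian base $\Zmod{p^m}$ because the common reduction $\bar\rho\otimes\bar\chi$ is absolutely irreducible; this rigidity\dash---a representation with absolutely irreducible reduction being determined by its pseudo-representation\dash---is precisely the uniqueness underlying the constructions of \cite{ckw}, and I would invoke it rather than reprove it. Granting it, $\rho\otimes\chi\cong\rho_f\otimes\chi\cong\rho_g$ with $g$ a weak eigenform in $S_{k+k(m)}(N)(\Zmod{p^m})$, which is the assertion.
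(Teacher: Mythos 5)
Your proposal is correct and takes essentially the same route as the paper's proof: set $g=\theta f$, use Theorem~\ref{theta-single-wt}(i)--(ii) to see that $g$ is a weak cuspidal eigenform of weight $k+k(m)$ with eigenvalue system $\ell\mapsto\chi(\ell)f(T_\ell)$, and use residual absolute irreducibility to conclude $\rho_g\cong\rho_f\otimes\chi$ (the Chebotarev/trace-rigidity argument you spell out is exactly what the paper leaves implicit in its final sentence). One quibble: cuspidality of $\theta f$ at \emph{all} cusps is better justified from the explicit formula defining $\theta_{p^m}$, in which every term has $f$ or $\partial f$ as a factor, rather than from the vanishing of the constant term of the $q$-expansion at $\infty$ alone.
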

\begin{proof} By (i) of Theorem \ref{theta-single-wt}, the $\theta$ operator maps cusp forms to cusp forms. Suppose then that $f \in S_k(N)(\Zmod{p^m})$ is a weak eigenform in the sense that $T_\ell f \equiv f(T_\ell) f \pmod{p^m}$ for all $\ell \nmid D$ for some integer $D$. Then $g = \theta f \in S_{k+k(m)}(N)(\Zmod{p^m})$ is a weak eigenform in the sense that $T_\ell g \equiv g(T_\ell) g \pmod{p^m}$, for all $\ell \nmid Dp$, where $g(T_\ell) \equiv f(T_\ell) \chi(\ell) \pmod{p^m}$. The hypothesis of residual absolute irreducibility then allows us to conclude that $\rho_f \otimes \chi \cong \rho_{\theta f}$.
\end{proof}

Regarding Corollary~\ref{main-cor}, the main result of the paper \cite{cs} implies an analogous statement about twisting with the mod $p^m$ reduction of the Teichm\"uller character, with some differences: In \cite{cs}, they consider the twist of $f$ by the mod $p^m$ reduction of the Teichm\"uller character where $f$ is a strong eigenform (again using the terminology of \cite{ckw}). One then finds a strong (and not merely weak) eigenform $g$ as in the Corollary, but apparently without control over the weight $k+k(m)$. The proof uses different methods (Coleman $p$-adic families of modular forms.)

\section{The theta operator modulo prime powers}

\subsection{Eisenstein series}\label{eisenstein_series} We shall now develop an explicit expression for the truncation modulo $p^m$ of the $p$-adic Eisenstein series $G_2$.

Recall that if $f=\sum_n a_n q^n$ is nonzero modular form on some $\Gamma_1(N)$ with coefficients in $\Q_p$ then as usual one defines $v_p(f) := \min \{ v_p(a_n) \mid ~ n\in\N \}$ where $v_p(a_n)$ is the usual (normalized) $p$-adic valuation of $a_n$.

\begin{prop}\label{prop:G_2_modpm} Let $m\in\N$. Define the positive even integers $k_0,\ldots,k_{m-1}$ as follows: If $m\ge 2$, define:
$$
k_j := 2 + p^{m-j-1}(p^{j+1}-1) \quad \mbox{for $j=0,\ldots,m-2$}
$$
and
$$
k_{m-1} := p^{m-1}(p+1)
$$
and define just $k_0 := p+1$ if $m=1$.

Then $k_0 < \ldots < k_{m-1}$ and there are modular forms $f_0,\ldots,f_{m-1}$, depending only on $p$ and $m$, of level $1$ and of weights $k_0,\ldots,k_{m-1}$, respectively, that have rational $q$-expansions, satisfy $v_p(f_j) = 0$ for all $j$, and are such that
$$
G_2 \equiv \sum_{j=0}^{m-1} p^j f_j \pmod{p^m}
$$
as a congruence between $q$-expansions.

The form $f_{m-1}$ can be chosen to be $f_{m-1} = G_{p+1}^{p^{m-1}}$.
\end{prop}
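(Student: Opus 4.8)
The plan is to induct on $m$, the engine being Serre's Kummer-type congruences for Eisenstein series together with the depletion relation. Write $G_k^{*} := G_k - p^{k-1}(G_k\mid V)$ for the $p$-depleted Eisenstein series, whose $n$-th $q$-coefficient is $\sum_{d\mid n,\, p\nmid d} d^{k-1}$ and whose constant term is $\tfrac12(1-p^{k-1})\zeta(1-k)$; by the Kummer--Serre congruences (\cite{serre_zeta}) one has $G_k^{*}\equiv G_{k'}^{*}\pmod{p^s}$ whenever $k\equiv k'\pmod{(p-1)p^{s-1}}$, for $k,k'$ even and positive. Since $k_0=2+p^{m-1}(p-1)\equiv 2\pmod{(p-1)p^{m-1}}$, the congruence $G_{k_0}^{*}\equiv G_2^{*}\pmod{p^m}$ reads $G_{k_0}-p^{k_0-1}(G_{k_0}\mid V)\equiv G_2-p(G_2\mid V)\pmod{p^m}$; as $k_0-1\ge m$ the term $p^{k_0-1}(G_{k_0}\mid V)$ vanishes modulo $p^m$, and rearranging gives the basic identity
$$ G_2 \equiv G_{k_0} + p\,(G_2\mid V) \pmod{p^m}. \qquad (\star)$$
This already settles $m=1$: it yields $G_2\equiv G_{p+1}\pmod p$, so $f_0 = G_{p+1}=G_{p+1}^{p^0}$.

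Assume the statement for $m-1$, with forms $f_0^{(m-1)},\dots,f_{m-2}^{(m-1)}$ of weights $k_0^{(m-1)}<\dots<k_{m-2}^{(m-1)}$ and top form $f_{m-2}^{(m-1)}=G_{p+1}^{p^{m-2}}$. Applying $\mid V$ to the inductive expansion of $G_2$ and feeding the result into $(\star)$ gives, after reindexing $i=j+1$,
$$ G_2 \equiv G_{k_0} + \sum_{i=1}^{m-1} p^{i}\,\bigl(f_{i-1}^{(m-1)}\mid V\bigr) \pmod{p^m}. $$
Thus I take $f_0:=G_{k_0}$ of weight $k_0=k_0^{(m)}$, and for each $i\ge 1$ it remains to replace $f_{i-1}^{(m-1)}\mid V$, modulo $p^{m-i}$, by a level-$1$ form $f_i$ of weight exactly $k_i^{(m)}$. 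The top term is immediate: since $f\mid V\equiv f^{p}\pmod p$ and its coefficient is $p^{m-1}$, I only need this congruence modulo $p$, and $(f_{m-2}^{(m-1)})^{p}=G_{p+1}^{p^{m-1}}$ is a level-$1$ form of weight $p^{m-1}(p+1)=k_{m-1}$; this is the asserted choice of $f_{m-1}$.

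The heart of the argument is therefore the weight-matching claim for $1\le i\le m-2$: with $g:=f_{i-1}^{(m-1)}$ of weight $k':=k_{i-1}^{(m-1)}$ and $r:=m-i$, the $p$-adic form $g\mid V$ is congruent modulo $p^{r}$ to a classical level-$1$ form of weight exactly $k_i^{(m)}$. First I would record that $k_i^{(m)}-k'=p^{m-1}(p-1)$, which is a multiple of $(p-1)p^{r-1}$; hence $k_i^{(m)}$ lies in the congruence class modulo $(p-1)p^{r-1}$ predicted for classical approximants to the weight-$k'$ $p$-adic form $g\mid V$ in the sense of \cite{serre_zeta}. Granting a level-$1$ approximant $h$ of $g\mid V$ modulo $p^{r}$ of weight $w$ with $w\equiv k'\pmod{(p-1)p^{r-1}}$ and $w\le k_i^{(m)}$, I would multiply by $E_{p-1}^{\,p^{r-1}s}$, where $s=(k_i^{(m)}-w)/((p-1)p^{r-1})\ge 0$; since $E_{p-1}^{p^{r-1}}\equiv 1\pmod{p^{r}}$ this leaves the class modulo $p^{r}$ unchanged while raising the weight to exactly $k_i^{(m)}$, producing $f_i$.

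The hard part will be the filtration bound $w\le k_i^{(m)}$, i.e.\ producing a classical level-$1$ approximant of weight no larger than the target. At the first level this is clear: modulo $p$ one has $g\mid V\equiv g^{p}$, a form of weight $pk'$, and a direct computation gives $k_i^{(m)}-p k' = 2-2p+(p-1)p^{m-i-1}>0$, so $w=pk'<k_i^{(m)}$ works. The genuine work is propagating this through the successive approximation in $r$: writing $g\mid V-h$ as $p^{r'}$ times a $p$-adic form and correcting by a classical form of the next weight, realigned via factors $E_{p-1}^{p^{r'-1}}\equiv 1$, while keeping every intermediate weight below $k_i^{(m)}$. This is exactly where the precise arithmetic of the $k_j^{(m)}$ is consumed. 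Finally, $k_0<\dots<k_{m-1}$ is the elementary check $k_{j+1}-k_j=p^{m-j-2}(p-1)>0$ together with $k_{m-1}-k_{m-2}=p^{m-1}+p-2>0$, and $v_p(f_j)=0$ holds because each $f_j$ is assembled from Eisenstein series $G_k$ (whose constant terms have denominators prime to $p$ by von Staudt--Clausen, and whose $q^1$-coefficient is $\sigma_{k-1}(1)=1$) and products thereof, so the $q$-expansions have a $p$-adic unit among their coefficients.
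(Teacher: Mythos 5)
Your global strategy is sound, and most of it is correct: the identity $(\star)$ follows from the Kummer--Serre congruences exactly as you say, the induction on $m$ via applying $\mid V$ to the expansion for $m-1$ is legitimate, the choice $f_0 = G_{k_0}$ works, and the top term is correctly handled by $f \mid V \equiv f^p \pmod p$ applied to $G_{p+1}^{p^{m-2}}$ (this is the same trick the paper uses). The problem is the step you yourself label ``the hard part'': producing, for $1 \le i \le m-2$, a classical level-$1$ form $h$ with $h \equiv g\mid V \pmod{p^{m-i}}$ of weight $w \le k_i^{(m)}$. This is not a technical remainder to be ``propagated''; it is the entire content of the proposition, and your sketched method for it does not work. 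After the first step $g \mid V \equiv g^p \pmod p$, the error $(g\mid V - g^p)/p$ is not a $p$-adic modular form of any single weight: $g \mid V$ has $p$-adic weight $k'$ while $g^p$ has weight $pk'$, and these are distinct as $p$-adic weights (they differ in the $\Z_p$-component, not merely modulo $(p-1)p^{n}$). Serre's theory says nothing about $p$-integral combinations of forms of genuinely different weights, and\dash---more fundamentally\dash---the qualitative theory of $p$-adic modular forms gives \emph{no upper bound at all} on the weight of a classical approximant modulo $p^r$; it only constrains its congruence class. So there is no ``next classical correction of the next weight'' available to you, and nothing to keep below $k_i^{(m)}$.

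The missing ingredient is precisely the paper's Lemma \ref{lemma:V-operator}: if $f$ is classical of level $1$ and weight $k$ with $v_p(f)=0$, and $\inf(s+1,\,p^s+1-k) \ge t$, then $f \mid V$ is congruent modulo $p^t$ to a classical level-$1$ form of weight exactly $k + p^s(p-1)$ (with $v_p = 0$). This is proved not by successive approximation but by working on $\Gamma_0(p)$: one multiplies $f\mid V$ by a power of $E_{p-1} - p^{p-1}(E_{p-1}\mid V)$ and takes the trace down to $\SL_2(\Z)$, bounding the error via Serre's Lemme 9 (the estimate involving the Fricke involution) \cite{serre_zeta}. If you import that lemma with $s = m-1$ and $t = m-i$, your induction closes: for $g$ of weight $k' = 2 + p^{m-i-1}(p^i-1)$ one checks $p^{m-1}+1-k' = p^{m-i-1}-1 \ge m-i$ and $s+1 = m \ge m-i$, and the output weight is $k' + p^{m-1}(p-1) = k_i^{(m)}$ on the nose, so your $E_{p-1}$-adjustment step is not even needed. (A small further remark: your closing argument for $v_p(f_j)=0$ via von Staudt--Clausen does not apply to the intermediate $f_i$, which are not built from Eisenstein series; but $v_p(f_i)=0$ follows immediately from $f_i \equiv g\mid V \pmod{p^{m-i}}$ and $v_p(g\mid V)=0$.) With that one lemma supplied, your induction on $m$ is a clean alternative packaging of the paper's double induction; without it, the proposal has a genuine gap at its core.
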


When $m=2$ we can, and will, be a bit more explicit:

\begin{prop}\label{prop:G_2_modp2} We have:
$$
G_2 \equiv f_0 + p\cdot f_1 \pmod{p^2}
$$
with modular forms $f_0$ and $f_1$ of weights $2+p(p-1)$ and $p(p+1)$, respectively, explicitly:
$$
G_2 \equiv G_{2+p(p-1)} + p\cdot G_{p+1}^p \pmod{p^2} .
$$
\end{prop}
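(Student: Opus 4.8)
The plan is to prove the mod $p^2$ statement directly from the general Proposition~\ref{prop:G_2_modpm} specialized to $m=2$, and then to pin down the two forms $f_0, f_1$ explicitly. Setting $m=2$ in the definition of the weights, we get $k_0 = 2 + p^{m-2}(p-1)\cdot\ldots$; concretely $j=0$ gives $k_0 = 2 + p^{0}(p^{1}-1) = 2+(p-1) = p+1$ from the first formula, but we actually want the weight $2+p(p-1)$, so I would recompute: with $m=2$ the range $j=0,\ldots,m-2$ is just $j=0$, giving $k_0 = 2 + p^{m-1}(p^{1}-1) = 2 + p(p-1)$, and $k_{m-1}=k_1 = p^{m-1}(p+1) = p(p+1)$. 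This already matches the claimed weights $2+p(p-1)$ and $p(p+1)$, so the weight bookkeeping is immediate from the general proposition.

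Next I would identify the forms themselves. The general proposition already tells us that $f_{m-1}$ may be taken to be $G_{p+1}^{p^{m-1}}$, so for $m=2$ we have $f_1 = G_{p+1}^{p}$, which is exactly the coefficient of $p$ in the claimed congruence. It remains to show that $f_0$ can be taken to be $G_{2+p(p-1)}$. The idea is that $G_2 \equiv f_0 + p\,f_1 \pmod{p^2}$ forces $f_0 \equiv G_2 \pmod p$, and I would verify that $G_{2+p(p-1)}$ satisfies this: since $2+p(p-1) \equiv 2 \pmod{p-1}$ and the weight is $\equiv 2 \pmod{p^{m-2}(p-1)}$ in the appropriate sense, the $p$-adic Eisenstein series $G_2$ and the classical $G_{2+p(p-1)}$ have congruent $q$-expansions modulo $p$ by Serre's congruences between Eisenstein series (their normalized versions $E_{2+p(p-1)} \equiv E_2 \pmod p$ follow from $2+p(p-1)\equiv 2\pmod{p-1}$ together with the Kummer congruences controlling the constant terms). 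This reduces the mod $p^2$ claim to checking that the two candidate expressions agree to second order.

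The crux is therefore the second-order comparison: I would verify that $G_2 - G_{2+p(p-1)} - p\,G_{p+1}^p \equiv 0 \pmod{p^2}$. By the previous paragraph the difference $G_2 - G_{2+p(p-1)}$ is $\equiv 0 \pmod p$, so I can write $G_2 - G_{2+p(p-1)} = p\cdot h$ for some $q$-expansion $h \in \Z_p[[q]]$, and the task becomes showing $h \equiv G_{p+1}^p \pmod p$. Here I expect to use $f\mid V \equiv f^p \pmod p$ (the relation flagged in the introduction) applied to $G_{p+1}$, giving $G_{p+1}^p \equiv G_{p+1}\mid V \pmod p$, together with the explicit form of the $p$-adic limit defining $G_2$ and its truncations. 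The coefficientwise content is that for the divisor sums, the second-order correction $\sigma_1(n)$ versus $\sigma_{1+p(p-1)}(n)$ is governed by a Kummer-type congruence whose $p$-linear term reproduces exactly the $q$-expansion of $G_{p+1}^p$, i.e.\ picks out the $p$-power divisors; this is the step where the specific weight $p+1$ and the $V$-operator relation enter.

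The main obstacle I anticipate is precisely this last coefficient identity: confirming that the leading $p$-adic correction term in the congruence $G_2 \equiv f_0 + p f_1$ is realized by the \emph{classical} form $G_{p+1}^p$ rather than merely by some form of the correct weight. The general proposition guarantees existence of \emph{some} $f_0$ of weight $2+p(p-1)$ with $v_p(f_0)=0$, but upgrading this to the canonical choice $f_0 = G_{2+p(p-1)}$ requires checking that the construction in the proof of Proposition~\ref{prop:G_2_modpm}, when run for $m=2$, actually outputs these normalized Eisenstein series; I would do this by tracing the recursive/telescoping construction there and matching constant terms via the Kummer congruences for Bernoulli numbers, which is where the genuine arithmetic lies rather than formal manipulation.
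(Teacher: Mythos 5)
Your proposal is correct in substance, but it reaches the result by a genuinely different (and longer) route than the paper. The paper's own proof is two lines: it specializes Corollary \ref{cor:G_2a} to $m=2$, which reads $G_2 \equiv G_{2+p(p-1)} + p\,( G_{p+1}\mid V) \pmod{p^2}$, and then applies $G_{p+1}\mid V \equiv G_{p+1}^p \pmod p$ to the second term. In particular, the step you single out as the main obstacle --- upgrading ``some $f_0$ of weight $2+p(p-1)$'' to the canonical choice $G_{2+p(p-1)}$ --- is not an obstacle at all once that corollary is in hand: its $j=0$ term carries $V^0=\mathrm{id}$, so it is literally the classical form $G_{2+p(p-1)}$, and only the $j=1$ term needs the $f\mid V \equiv f^p$ trick. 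Your alternative --- writing $G_2 - G_{2+p(p-1)} = p h$ and checking $h \equiv G_{p+1}^p \equiv G_{p+1}\mid V \pmod p$ coefficient by coefficient --- does go through: for $n\ge 1$, Euler's theorem modulo $p^2$ gives $\sigma_{1+p(p-1)}(n) \equiv \sum_{d\mid n,\ p\nmid d} d \pmod{p^2}$, hence $\sigma_1(n)-\sigma_{1+p(p-1)}(n) \equiv p\,\sigma_1(n/p) \equiv p\,\sigma_p(n/p) \pmod{p^2}$, which is exactly $p$ times the $n$-th coefficient of $G_{p+1}\mid V$ (terms with $p\nmid n$ vanishing); and the constant term is exactly the Bernoulli congruence $\tfrac{B_2}{2} \equiv \tfrac{B_{2+p(p-1)}}{2+p(p-1)} + p\,\tfrac{B_{p+1}}{p+1} \pmod{p^2}$, which follows from two applications of the Kummer congruences (the paper records this congruence as a \emph{consequence} of the Proposition, noting it can also be seen via $p$-adic continuity of Bernoulli numbers). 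So your argument is self-contained and elementary, but it re-proves by hand, via Kummer congruences, what the paper already packaged into Corollary \ref{cor:G_2a} through Lemma \ref{lemma:G_k^*} --- where the constant term is controlled not by Kummer but by Serre's Th\'{e}or\`{e}me 8 on $p$-integrality of constant terms of forms of weight $\not\equiv 0 \pmod{p-1}$. What your route buys is independence from that machinery and a transparent view of where the arithmetic lies; what the paper's route buys is brevity and uniformity with the proof of the general Proposition \ref{prop:G_2_modpm}.
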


It is amusing to note the following consequence of the Proposition: For $p \not=2, 3$, we have the following congruence of Bernoulli numbers,
$$
\frac{B_2}{2} \equiv \frac{B_{p(p-1)+2}}{p(p-1)+2} + p \frac{B_{p+1}}{p+1} \pmod {p^2}.
$$

However, this can also be seen in terms of $p$-adic continuity of Bernoulli numbers (cf.\ \cite{washington}, Cor.\ 5.14 on p.\ 61, for instance).
\smallskip

Before the proofs of these propositions we need a couple of preparations.
\smallskip

Let $k$ be an even integer $\ge 2$. Recall from \cite{serre_zeta} that if we choose a sequence of even integers $k_i$ such that $k_i \rightarrow \infty$ in the usual, real metric, but $k_i\rightarrow k$ in the $p$-adic metric, then the sequence $G_{k_i}$ has a $p$-adic limit denoted by $G_k^{\ast}$. This series $G_k^{\ast}$ is a $p$-adic modular form of weight $k$ with $p$-adically integral coefficients if $(p-1) \nmid k$ (this condition ensures that the constant term is a $p$-adic integer). It does not depend on the choice of the sequence $k_i$.

\begin{lem}\label{lemma:G_k^*} Let $k$ be an even integer $\ge 2$ and assume $(p-1)\nmid k$. Let $t\in\N$.

Then $G_k^{\ast} \equiv G_{k+p^{t-1}(p-1)} \pmod{p^t}$.
\end{lem}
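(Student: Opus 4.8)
The plan is to exploit the definition of $G_k^{\ast}$ as the $p$-adic limit of $G_{k_i}$ with the chosen sequence $k_i := k + p^{i-1}(p-1)$, so that the target form $G_{k+p^{t-1}(p-1)}$ is precisely the $t$-th term $G_{k_t}$. I would first reduce the claim to the statement that the sequence is eventually congruent to this $t$-th term, namely
$$
G_{k_i} \equiv G_{k_t} \pmod{p^t} \qquad \text{for all } i \ge t,
$$
as a congruence of $q$-expansions. Granting this, since each coefficient of $G_k^{\ast}$ is the $p$-adic limit of the corresponding coefficient of $G_{k_i}$, and each coset $a + p^t\Z_p$ is closed in $\Z_p$, the congruence passes to the limit and yields $G_k^{\ast} \equiv G_{k_t} \pmod{p^t}$, which is the assertion.

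The arithmetic input I would record first is that
$$
k_i - k_t = (p-1)\,p^{t-1}\,(p^{i-t}-1) \quad \text{for } i \ge t,
$$
so that $k_i \equiv k_t \pmod{(p-1)p^{t-1}}$, i.e.\ modulo $\varphi(p^t)$; and that, because $p \ge 5$ and $k \ge 2$, one has $k_t - 1 \ge 1 + p^{t-1}(p-1) \ge t$. I would also note that $(p-1)\nmid k_i$ and $(p-1)\nmid k_t$, since both are $\equiv k \pmod{p-1}$ while $(p-1)\nmid k$ by hypothesis.

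With this in hand I would treat the non-constant coefficients and the constant term separately. For $n \ge 1$ the coefficient of $q^n$ in $G_{k_i} - G_{k_t}$ is $\sigma_{k_i-1}(n) - \sigma_{k_t-1}(n) = \sum_{d\mid n}(d^{k_i-1}-d^{k_t-1})$. Each divisor $d$ divisible by $p$ contributes a term divisible by $p^{k_t-1}$, hence by $p^t$; for each $d$ prime to $p$ we have $d^{k_i-1}-d^{k_t-1}=d^{k_t-1}(d^{k_i-k_t}-1)$, and $d^{k_i-k_t}\equiv 1 \pmod{p^t}$ by Euler's theorem since $\varphi(p^t)\mid (k_i-k_t)$. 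Thus all non-constant coefficients agree modulo $p^t$; this part is entirely elementary.

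The remaining point is the constant term, where I expect \emph{the main obstacle} to lie: one must show $\tfrac{B_{k_i}}{k_i}\equiv \tfrac{B_{k_t}}{k_t}\pmod{p^t}$ (the factor $\tfrac12$ being a unit for $p\ge 5$). Here I would invoke the generalized Kummer congruences (e.g.\ \cite{washington}, Cor.\ 5.14): since $(p-1)\nmid k_i,k_t$ and $k_i\equiv k_t \pmod{(p-1)p^{t-1}}$, the von Staudt--Clausen theorem guarantees $p$-integrality of $\tfrac{B_{k_i}}{k_i}$ and $\tfrac{B_{k_t}}{k_t}$, and one has $(1-p^{k_i-1})\tfrac{B_{k_i}}{k_i}\equiv (1-p^{k_t-1})\tfrac{B_{k_t}}{k_t}\pmod{p^t}$. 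Finally $k_i-1,\,k_t-1\ge t$ forces $1-p^{k_i-1}\equiv 1-p^{k_t-1}\equiv 1\pmod{p^t}$, so the Euler factors drop out and the desired congruence of constant terms follows. Combining the two cases gives $G_{k_i}\equiv G_{k_t}\pmod{p^t}$ for all $i\ge t$, completing the reduction and hence the proof.
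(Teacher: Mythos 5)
Your proof is correct, but it takes a genuinely different route from the paper's at the one non-trivial step. Both arguments make the same reduction\dash---compare members of the sequence $G_{k_i}$, $k_i = k+p^{i-1}(p-1)$, for indices $\ge t$ and pass to the $p$-adic limit\dash---and both handle the non-constant coefficients by the same elementary Euler-theorem computation on divisor sums. The difference is the constant term. You obtain it from the Kummer congruences (\cite{washington}, Cor.\ 5.14): since $(p-1)\nmid k_i,k_t$ and $k_i\equiv k_t \pmod{(p-1)p^{t-1}}$, one has $(1-p^{k_i-1})\tfrac{B_{k_i}}{k_i}\equiv(1-p^{k_t-1})\tfrac{B_{k_t}}{k_t}\pmod{p^t}$, and the Euler factors are trivial modulo $p^t$. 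The paper never touches Bernoulli arithmetic: it multiplies $G_{k_u}$ by $E_{p^{t-1}(p-1)}^s$ so as to land in the same weight $k_v$ as $G_{k_v}$, notes that the difference divided by $p^t$ has $p$-integral non-constant coefficients, and then invokes Serre's Th\'{e}or\`{e}me 8 of \cite{serre_modp}, which says that for a form of weight $\not\equiv 0 \pmod{p-1}$ this forces the constant term to be $p$-integral as well. So the paper's key input is a structure theorem for modular forms, yours is classical Bernoulli-number arithmetic; the two are essentially equivalent in strength here, and indeed the paper itself observes (after Proposition \ref{prop:G_2_modp2}) that such Bernoulli congruences can be seen via \cite{washington}, Cor.\ 5.14. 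Your route is more elementary and skips the weight-equalizing trick with powers of $E_{p^{t-1}(p-1)}$; the paper's route stays inside the modular-forms framework, where the same argument works whenever the constant term is not a priori a recognizable Bernoulli quotient. One minor attribution point: the $p$-integrality of $B_{k_i}/k_i$ for $(p-1)\nmid k_i$ is not quite von Staudt--Clausen (which controls only the denominator of $B_{k_i}$ itself); it is the Adams/Kummer statement (e.g.\ \cite{washington}, Cor.\ 5.13)\dash---harmless here, since the corollary you cite already carries the needed integrality.
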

\begin{proof} Put $k_i := k + p^{i-1}(p-1)$ for $i\in\N$. We claim that $G_{k_u} \equiv G_{k_v} \pmod{p^t}$ when $u,v\ge t$. Since the series $G_{k_i}$ with $k_i := k + p^{i-1}(p-1)$ converges $p$-adically to $G_k^{\ast}$, the claim clearly implies the Lemma.

If $u=v$ the claim is trivial, so suppose not, say $u<v$. Then $k_v-k_u = p^{v-1}(p-1) - p^{u-1}(p-1)$ is a multiple of $p^{t-1}(p-1)$, say $k_v - k_u = s\cdot p^{t-1}(p-1)$. We also have $k_v - k_u \ge 4$. Hence, we find that $G:=G_{k_u} \cdot E_{p^{t-1}(p-1)}^s$ is a modular form of weight $k_v$, and we have $G_{k_u} \equiv G \pmod{p^t}$.

Now notice that, when $i\ge t$, we have:
$$
\sigma_{k_i-1}(n) = \sum_{d\mid n} d^{k-1+p^{i-1}(p-1)} \equiv \sum_{\substack{d\mid n \\ p\nmid d}} d^{k-1} \pmod{p^t}
$$
as $d^{p^{i-1}(p-1)} \equiv 1 \pmod{p^t}$ when $p\nmid d$ and $i\ge t$, and as $d^{p^{i-1}(p-1)} \equiv 0 \pmod{p^t}$ when $p\mid d$ and $i\ge t$ because $p^{t-1}(p-1) \ge t$ for $t\in\N$.

We conclude that the nonconstant terms of the series $G_{k_u}$ and $G_{k_v}$ are termwise congruent modulo $p^t$. The same is then true of the forms $G$ and $G_{k_v}$ that are both forms of weight $k_v$. Hence, the nonconstant terms of the form $(G-G_{k_v})/p^t$ are all $p$-integral. As $k_v \equiv k \not\equiv 0 \pmod{(p-1)}$, it follows from Th\'{e}or\`{e}me 8 of \cite{serre_modp} that the constant term of this form is in fact also $p$-integral. Hence,
$$
G_{k_u} \equiv G \equiv G_{k_v} \pmod{p^t}
$$
as desired.
\end{proof}

Recall that the $V$ operator is defined on formal $q$-expansions as
$$
\left( \sum a_n q^n \right) \mid V := \sum a_n q^{np} .
$$

\begin{cor}\label{cor:G_2a} We have
$$
G_2 \equiv \sum_{j=0}^{m-1} p^j \cdot \left( G_{2+p^{m-j-1}(p-1)}\mid V^j \right) \pmod{p^m}
$$
as a congruence between formal $q$-expansions.
\end{cor}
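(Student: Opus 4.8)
The plan is to reduce everything to a single \emph{$p$-stabilization identity} for the weight-two series, namely
$$
G_2 = G_2^{\ast} + p\cdot(G_2\mid V),
$$
and then to iterate it $m$ times before appealing to Lemma~\ref{lemma:G_k^*}. First I would verify this identity as an equality of formal $q$-expansions. On the non-constant terms it is completely elementary: the coefficient of $q^N$ (with $N\ge 1$) in $G_2 - p\cdot(G_2\mid V)$ is $\sigma_1(N) - p\,\sigma_1(N/p)$, where the second term is read as $0$ when $p\nmid N$, and splitting the divisors $d\mid N$ according to whether $p\mid d$ shows that this equals $\sum_{d\mid N,\ p\nmid d} d$. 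On the other hand this is exactly the $q^N$-coefficient of $G_2^{\ast}$, as one sees by letting $i\to\infty$ in $\sigma_{k_i-1}(N)=\sum_{d\mid N} d^{\,1+p^{i-1}(p-1)}$ along $k_i=2+p^{i-1}(p-1)$, since $d^{\,p^{i-1}(p-1)}\to 1$ for $p\nmid d$ while $d^{\,1+p^{i-1}(p-1)}\to 0$ for $p\mid d$.

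Granting the identity, the rest is bookkeeping with the $V$ operator. Substituting the identity into its own right-hand occurrence of $G_2\mid V$ and using that $V$ is $\Z_p$-linear on formal $q$-expansions, an easy induction on $r$ gives
$$
G_2 = \sum_{j=0}^{r-1} p^j\cdot(G_2^{\ast}\mid V^j) + p^r\cdot(G_2\mid V^r)
$$
for every $r\ge 1$. Taking $r=m$ and reducing modulo $p^m$ annihilates the tail term, so that
$$
G_2 \equiv \sum_{j=0}^{m-1} p^j\cdot(G_2^{\ast}\mid V^j) \pmod{p^m}.
$$
To finish I would replace each $G_2^{\ast}$ by the advertised classical form: Lemma~\ref{lemma:G_k^*} applies with $k=2$, which is legitimate since $p\ge 5$ forces $(p-1)\nmid 2$, and with $t=m-j$ it yields $G_2^{\ast}\equiv G_{2+p^{m-j-1}(p-1)}\pmod{p^{m-j}}$. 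Multiplying through by $p^j$ upgrades this to a congruence modulo $p^m$, and since applying $V^j$ merely relabels $q$-exponents and therefore preserves congruences modulo $p^m$, summing over $j=0,\ldots,m-1$ produces precisely the asserted formula.

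The one point that is not purely formal, and which I expect to be the main obstacle, is the \emph{constant term} in the stabilization identity. Because $G_2$ is only a $p$-adic (not classical) modular form, one cannot invoke the classical theory to compare the two sides; the non-constant terms match by the elementary divisor computation above, but matching the constant terms amounts to checking that $G_2^{\ast}$ has constant term $(1-p)\cdot(-1/24)=(p-1)/24$. This is exactly where Serre's construction of the $p$-adic Eisenstein series enters --- equivalently, the $p$-adic continuity of the constant terms $-B_{k_i}/(2k_i)$, which (after absorbing the Euler factor $1-p^{k_i-1}\to 1$) is the constant-term content already packaged in Lemma~\ref{lemma:G_k^*} via Th\'{e}or\`{e}me~8 of \cite{serre_modp}. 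Once this single non-elementary input is secured, the iteration and the application of the Lemma are routine.
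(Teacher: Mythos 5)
Your proposal is correct and takes essentially the same route as the paper: the paper's proof consists of quoting from \S 2 of \cite{serre_zeta} the identity $G_k = G_k^{\ast} + p^{k-1}\,(G_k^{\ast}\mid V) + \cdots + p^{t(k-1)}\,(G_k^{\ast}\mid V^t) + \cdots$ --- which is precisely your one-step stabilization identity after iteration --- then specializing to $k=2$, reducing modulo $p^m$, and applying Lemma~\ref{lemma:G_k^*} (legitimate since $(p-1)\nmid 2$) with $t=m-j$, exactly as in your final step. The only difference is that you re-derive Serre's identity instead of citing it, and there your side remark that the constant-term input is ``already packaged in Lemma~\ref{lemma:G_k^*}'' is slightly off: that lemma only ties the constant term of $G_2^{\ast}$ to $-B_{k_i}/(2k_i)$ for the auxiliary high weights $k_i$, while identifying the limit as $(p-1)/24$ additionally needs the Kummer congruences (\cite{washington}, Cor.~5.14, which the paper invokes elsewhere) or, equivalently, Serre's evaluation of the constant term of $G_k^{\ast}$ as $\tfrac{1}{2}(1-p^{k-1})\zeta(1-k)$ --- the source you do cite first, so your argument stands.
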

\begin{proof} Recall from \cite{serre_zeta}, \S 2, the identity, valid for any even integer $k\ge 2$, that
$$
G_k = G_k^{\ast} + p^{k-1} \left( G_k^{\ast} \mid V \right) + \ldots + p^{t(k-1)} \left( G_k^{\ast} \mid V^t \right) + \ldots .
$$

The identity can first be seen as an identity of formal $q$-expansions, but when we specialize to $k=2$ it shows that $G_2$ is a $p$-adic modular form as $V$ acts on $p$-adic modular forms, cf.\ \cite{serre_zeta}, \S $2$.

Thus, we consider the identity for $k=2$, reduce modulo $p^m$, and note that the previous Lemma applies since $(p-1)\nmid 2$. The claim then follows immediately.
\end{proof}

In the next paragraph and lemma, we use the notation $M_k(\Gamma,F)$ to mean the $F$-module of modular forms of weight $k$ over $F$, where $\Gamma$ is a congruence subgroup of $\SL_2(\Z)$ and $F$ is a subring of $\C$ or $\C_p$.

We can also see the $V$ operator as an operator on modular forms: Suppose that $f\in M_k(\SL_2(\Z),\C)$. Then $(f\mid V)(z) = f(pz)$, and as is well-known $f\mid V \in M_k(\Gamma_0(p);\C)$. The proof of the next lemma is a simple application of section 3.2 of \cite{serre_zeta}.

\begin{lem}\label{lemma:V-operator} Let $f\in M_k(\SL_2(\Z),\Q)$ and suppose $v_p(f) = 0$. Let $t\in\N$ and suppose that $s\in \Z_{\ge 0}$ is such that
$$
\inf (s+1,p^s+1-k) \ge t .
$$

Then there is $h\in M_{k+p^s(p-1)}(\SL_2(\Z),\Q)$ with $v_p(h) = 0$, and such that
$$
f\mid V \equiv h \pmod{p^t}.
$$
\end{lem}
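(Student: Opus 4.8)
The goal is to express the image of a single classical modular form $f$ of weight $k$ under the $V$ operator, congruent mod $p^t$, as a classical modular form of weight $k+p^s(p-1)$. The key input is the congruence $f\mid V \equiv f^p \pmod p$, which upgrades to higher powers of $p$ via the Eisenstein series $E_{p-1}$. My first step is to recall from \S 3.2 of \cite{serre_zeta} the relation between the $V$ operator and the Frobenius: on $q$-expansions, $f\mid V$ picks out the terms $a_nq^{np}$, and modulo $p$ this agrees with $f^p$ because $\left(\sum a_nq^n\right)^p \equiv \sum a_n^pq^{np} \equiv \sum a_nq^{np} \pmod p$ by Fermat. Thus $f\mid V \equiv f^p \pmod p$, and since $f^p$ has weight $kp$, this already realizes $f\mid V$ mod $p$ as a classical form --- but of the wrong weight.

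To reach the correct weight $k+p^s(p-1)$ and the correct modulus $p^t$, my second step is an inductive comparison governed by the two quantities $s+1$ and $p^s+1-k$ appearing in the hypothesis $\inf(s+1,\,p^s+1-k)\ge t$. I would build $h$ by correcting $f^p$ (or an appropriate power-of-$E_{p-1}$ multiple of $f$) one power of $p$ at a time. Concretely, starting from $f\mid V \equiv f^p \pmod p$, I expect to iterate: at each stage the discrepancy between $f\mid V$ and the current approximation is divisible by a higher power of $p$, and I absorb it by multiplying correction terms by powers of $E_{p-1}$, whose triviality mod prime powers ($E_{p-1}^{p^{t-1}}\equiv 1 \pmod{p^t}$) lets me adjust weights without changing $q$-expansions mod $p^t$. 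The weight of $h$ must come out to $k+p^s(p-1)$, and the two bounds encode exactly how far the induction can be pushed: $s+1\ge t$ controls how many Frobenius/$V$ iterations are available, while $p^s+1-k\ge t$ ensures the weight shift $p^s(p-1)$ is large enough relative to $k$ that the congruence survives mod $p^t$.

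The condition $v_p(h)=0$ should follow from $v_p(f)=0$ together with the fact that all the operations involved (the $V$ operator, multiplication by $E_{p-1}$ which has $v_p(E_{p-1})=0$, and reduction of $f^p$) preserve the property that the $q$-expansion is a $p$-adic unit in at least one coefficient; I would verify this by tracking the leading nonconstant coefficient through the construction.

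\textbf{The main obstacle} I anticipate is pinning down the precise combinatorics of the weight bookkeeping: ensuring that the correction terms, after being multiplied by the requisite powers of $E_{p-1}$ to land in weight $k+p^s(p-1)$, genuinely agree with $f\mid V$ modulo $p^t$ rather than merely modulo some smaller power of $p$. This is where both hypotheses $s+1\ge t$ and $p^s+1-k\ge t$ must be used in tandem, and getting the interplay exactly right --- particularly tracking the valuation of the Frobenius-correction terms as in Serre's treatment of the action of $V$ on $p$-adic forms --- is the delicate point. Once that estimate is established, assembling $h$ as the resulting classical form of the stated weight with $v_p(h)=0$ is routine.
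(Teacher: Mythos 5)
There is a genuine gap, and it is not the one you flagged. The real difficulty in this lemma is \emph{level}, not weight bookkeeping: $f\mid V$ is a modular form on $\Gamma_0(p)$, and the whole content of the statement is that it can be approximated modulo $p^t$ by a form on $\SL_2(\Z)$. Your engine, $f\mid V\equiv f^p\pmod p$, does handle $t=1$ (after multiplying by $E_{p-1}^{p^s-k}$ to fix the weight), but your inductive step has nothing to run on: the discrepancy $\bigl(f\mid V-f^pE_{p-1}^{p^s-k}\bigr)/p$ is merely a $q$-expansion with $p$-integral coefficients; it is not known to be a classical modular form of any weight or any level, so there is no object to which you can apply a further correction, and multiplying by powers of $E_{p-1}$ (which only shifts the weight of forms you already have) can never manufacture the required congruence to a level-one form. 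In short, the induction you describe is not a proof scheme that can be completed by "tracking valuations"; it is missing the one idea the lemma actually needs, namely a level-lowering device.

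The paper's proof supplies exactly that device, in one shot rather than by induction: put $g:=E_{p-1}-p^{p-1}(E_{p-1}\mid V)$, a weight-$(p-1)$ form on $\Gamma_0(p)$, and define $h:=f_s:=\Tr\bigl((f\mid V)\cdot g^{p^s}\bigr)$, where $\Tr$ is the trace from $\Gamma_0(p)$ down to $\SL_2(\Z)$; this is automatically a level-one form of weight $k+p^s(p-1)$ with rational $q$-expansion. The estimate $v_p\bigl(f_s-(f\mid V)\bigr)\ge\inf(s+1,\,p^s+1-k)$ then comes from Serre's Lemme 9, and its two terms have precise origins that do not match your reading of the hypothesis: $s+1$ arises because $g\equiv 1\pmod p$ forces $g^{p^s}\equiv 1\pmod{p^{s+1}}$ (there is only \emph{one} application of $V$, not $t$ iterations), while $p^s+1-k$ arises from the valuation of $f\mid V$ at the \emph{other} cusp of $X_0(p)$, computed via the Fricke involution: $f\mid VW=p^{-k/2}f$, so $v_p(f\mid VW)=-k/2$. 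This control at the second cusp is indispensable for the trace estimate and is entirely absent from your sketch. It is also worth noting that the congruence $f\mid V\equiv f^p\pmod p$ is indeed used in the paper, but as a \emph{separate} device in the proof of Proposition \ref{prop:G_2_modpm} (for the last term $j=m-1$), precisely in the case where the inequality hypothesis of this lemma fails \dash--- see the Remark following that proof \dash--- so the two mechanisms are complementary, not interchangeable.
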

\begin{proof} As we noted above, $f\mid V$ is a modular form of weight $k$ on $\Gamma_0(p)$. Since $f\mid V = \sum a_n q^{np}$ if $f=\sum a_n q^n$ we have $v_p(f\mid V) = 0$. Recall the Fricke involution for modular forms on $\Gamma_0(p)$ given by the action of the matrix
$$
W = \abcd{0}{-1}{p}{0} .
$$

Since
$$
f\mid V = p^{-k/2} f\mid_k \abcd{p}{0}{0}{1} ,
$$
(recall that the weight $k$ action is normalized so that diagonal matrices act trivially), since
$$
\abcd{p}{0}{0}{1} \abcd{0}{-1}{p}{0} = \abcd{0}{-1}{1}{0} \abcd{p}{0}{0}{p} ,
$$
and since $f$ is on $\SL_2(\Z)$ we see that $f\mid VW = p^{-k/2} f$ so that $v_p(f\mid VW) = -k/2$.

Now let $E:=E_{p-1}$ and put
$$
g := E - p^{p-1} (E\mid V)
$$
so that $g$ is a modular form of weight $p-1$ on $\Gamma_0(p)$. Then, if we put
$$
f_s := \Tr ((f\mid V)\cdot g^{p^s})
$$
for $s\in\Z_{\ge 0}$ where $\Tr$ denotes the trace from $\Gamma_0(p)$ to $\SL_2(\Z)$, it follows from section 3.2 of \cite{serre_zeta} that $f_s$ is a modular form on $\SL_2(\Z)$ of weight $k+p^s(p-1)$ and rational $q$-expansion. Furthermore, Lemme 9 of \cite{serre_zeta} implies that
\begin{eqnarray*}
v_p(f_s-(f\mid V)) & \ge & \inf (s+1,p^s+1+v_p(f\mid VW) -k/2) \\
& = & \inf (s+1,p^s+1-k) \ge t .
\end{eqnarray*}

Hence, we can choose $h:=f_s$. As $f\mid V \equiv h \pmod{p^t}$ and $v_p(f\mid V) = 0$, we must have $v_p(h) = 0$.
\end{proof}

\begin{proof}[Proof of Proposition \ref{prop:G_2_modpm}:] That the defined weights $k_0,\ldots,k_{m-1}$ satisfy $k_0 < \ldots < k_{m-1}$ is verified immediately.

Thus, starting with Corollary \ref{cor:G_2a} we see that it suffices to show that for each $j\in \{ 0,\ldots ,m-1\}$ there is a modular form $f_j$ of weight $k_j$ with rational $q$-expansion and $v_p(f_j) = 0$, and such that
$$
G_{2+p^{m-j-1}(p-1)} \mid V^j \equiv f_j \pmod{p^{m-j}} .
$$

If $m=1$, $j=0$ we just take $f_0 := G_{p+1}$, so assume $m\ge 2$. Then, if $j=m-1$, note that
$$
G_{p+1} \mid V^{m-1} \equiv G_{p+1}^{p^{m-1}} \pmod{p}
$$
so that we can take $f_{m-1} := G_{p+1}^{p^{m-1}}$ that is of weight $k_{m-1} = p^{m-1}(p+1)$.

So, suppose that $j\le m-2$. We claim that for $r=0,\ldots ,j$ there is a modular form $g_r$ of weight $2+p^{m-j-1}(p^{r+1}-1)$, rational $q$-expansion with $v_p(g_r)=0$, and such that
$$
G_{2+p^{m-j-1}(p-1)} \mid V^r \equiv g_r \pmod{p^{m-j}}
$$
which is the desired result when $r=j$.

We prove the last claim by induction on $r$ noting that the case $r=0$ is trivial. So, suppose that $r<j$ and that we have already shown the existence of a modular form $g_r$ as above. Notice that
$$
p^{m-j+r} + 1 - (2 + p^{m-j-1}(p^{r+1}-1)) = p^{m-j-1} - 1 \ge m-j
$$
holds because $m-j \ge 2$ (we used here that $p>2$). Thus we see that Lemma \ref{lemma:V-operator} applies (taking $s = m-j+r$) and shows the existence of a modular form $g_{r+1}$ with rational $q$-expansion and $v_p(g_{r+1})=0$, such that
$$
g_r \mid V \equiv g_{r+1} \pmod{p^{m-j}},
$$
and such that $g_{r+1}$ has weight
$$
2+p^{m-j-1}(p^{r+1}-1) + p^{m-j+r}(p-1) = 2+p^{m-j-1}(p^{r+2}-1),
$$
and we are done.
\end{proof}

\begin{remark}
It is interesting to note that in the induction, the inequalities do not allow us to deal with the case $j = m-1$, but then we use the congruence $f \mid V \equiv f^p \pmod p$ to take care of the last term.
\end{remark}

\begin{proof}[Proof of Proposition \ref{prop:G_2_modp2}:] Again by Corollary \ref{cor:G_2a} we have:
$$
G_2 \equiv G_{2+p(p-1)} + p\cdot \left( G_{p+1} \mid V \right) \pmod{p^2} .
$$

Noting again that $G_{p+1} \mid V \equiv G_{p+1}^p \pmod{p}$ so that
$$
p\cdot \left( G_{p+1} \mid V \right) \equiv p\cdot G_{p+1}^p \pmod{p^2} ,
$$
we are done.
\end{proof}

\subsection{Definition and properties of the \texorpdfstring{$\theta$}{theta} operator} \label{def_theta}

Recall the classical $\theta$ ope\-rator acting on formal $q$-expansions as $q\frac{d}{dq}$, i.e.,
$$
\theta \left( \sum a_n q^n \right) := \sum n a_n q^n,
$$
and the operator $\partial$ defined by
\begin{equation*}
\frac{1}{12} \partial f := \theta f - \frac{k}{12} E_2 \cdot f = \theta f + 2k G_2\cdot f
\end{equation*}
when $f = \sum a_n q^n \in M_k(N,\C)$ is a modular form of weight $k$ (we have $B_2 = \frac{1}{6}$ so that $E_2 = -24 G_2$). Then $\partial f$ is in $M_{k+2}(N,\C)$, and $\partial$ defines a derivation on $M(N,\C) := \oplus_k M_k(N,\C)$ as follows by writing $\theta = \frac{1}{2\pi i} \cdot \frac{d}{dz}$ (as $q = e^{2\pi i z})$ and combining with the classical transformation properties of $E_2$ under the weight $2$ action of $\SL_2(\Z)$ given below in \eqref{E-two}. Also, these facts can be used to deduce that $\partial$ preserves cusp forms.

The definition of $\partial$ implies that $\partial$ defines a derivation on $\oplus_k M_k(N,\Z)$ and hence also on $M(N,\Z_p) := \oplus_k M_k(N,\Z_p)$.

\begin{proof}[Proof of Theorem \ref{theta-single-wt}]

{\it (i)} Retain the notation of Proposition \ref{prop:G_2_modpm} so that
$$
k_j := 2 + p^{m-j-1}(p^{j+1}-1) \quad \mbox{for $j=0,\ldots,m-2$} ,
$$
and
$$
k_{m-1} := p^{m-1}(p+1) .
$$

Then $k_0 < \ldots < k_{m-1}$ and by Proposition \ref{prop:G_2_modpm} we have modular forms $f_0,\ldots,f_{m-1}$ (of level $1$ and) of weights $k_0,\ldots,k_{m-1}$, respectively, that have rational $q$-expan\-sions, satisfy $v_p(f_j) = 0$ for all $j$, and are such that
$$
G_2 \equiv \sum_{j=0}^{m-1} p^j f_j \pmod{p^m} .
$$

With $k(m) := 2 + 2 p^{m-1}(p-1)$ one checks that each number $k(m) - k_j$ is a multiple of $p^{m-j-1}(p-1)$, say $k(m) = k_j + t_j\cdot p^{m-j-1}(p-1)$ for $j=0,\ldots, m-1$.

Since $E_{p-1}^{p^{m-j-1}} \equiv 1 \pmod{p^{m-j}}$ we find that $p^j E_{p-1}^{p^{m-j-1} t_j} \equiv p^j \pmod{p^m}$, and so the above congruence for $G_2$ can also be written as
$$
G_2 \equiv \sum_{j=0}^{m-1} p^j E_{p-1}^{p^{m-j-1} t_j} f_j \pmod{p^m}
$$
where now each summand is a form of weight $k(m)$.

Hence, for an element $f\in M_k(N,\Z_p)$ we find that
\begin{align}
\theta f & = \frac{1}{12} \partial f - 2k G_2\cdot f \\
\label{theta-defn}
& \equiv \frac{1}{12} E_{p-1}^{2p^{m-1}} \partial f - 2k f
\sum_{j=0}^{m-1} p^j E_{p-1}^{p^{m-j-1} t_j} f_j \pmod{p^m} \\
\label{theta-defn1}
& =: \theta_{p^m} f \in M_{k+k(m)}(\Z_p)
\end{align}
where now each summand on the right hand side is an element of $M_{k+k(m)}(N,\Z_p)$. Thus the classical theta operator induces a linear map
$$
\theta = \theta_{p^m} : M_k(N,\Zmodpm{p^m}) \rightarrow M_{k + k(m)}(N,\Zmodpm{p^m})
$$
the effect of which on $q$-expansions is $\sum a_n q^n \mapsto \sum
n a_n q^n$. We still denote this operator (by abuse of notation) by
$\theta$, but later when we need to distinguish it from $\theta := \frac{1}{2\pi i} \cdot \frac{d}{dz}$, we will denote it by $\theta_{p^m}$. Since, as noted above, the operator $\partial$ preserves cusp forms we see from the definition that $\theta_{p^m}$ does too.
\smallskip

\noindent {\it (ii)} First assume for the prime $\ell$ that we have $\ell\nmid N$. Recall that the diamond operator $\diam{\ell}_k$ on a modular form $f$ of weight $k$ is defined by $\diam{\ell}_k f = f \mid_k \gamma$ for any $\gamma = \begin{pmatrix} a & b \\ c & d \end{pmatrix} \in \SL_2(\Z)$, with $c \equiv 0 \pmod N$, $d \equiv \ell \pmod N$. (Note that we write the action of $\diam{\cdot}_k$ from the left, though it is given by the stroke operator which is from the right. This is fine because $(\Z/N\Z)^\times$ is abelian). Now, if $\ell \neq p$ then the operator $\diam{\ell}_k$ induces a linear action on $M_k(N,\Z_p)$ (as follows from the well-known formula $\ell^{k-1} \diam{\ell}_k = T_{\ell}^2 - T_{\ell^2}$ and the fact that the Hecke operators $T_n$ preserve $M_k(N,\Z)$), and hence also on $M_k(N,\Zmodpm{p^m})$. This is still true when $\ell = p$: for, since $p\nmid N$ and since the definition of $\diam{\ell}_k$ only depends on the congruence class of $\ell$ mod $N$, we can choose a prime $\ell_1$ not dividing $Np$ such that $\diam{\ell_1}_k = \diam{p}_k$, and the claim follows from what has already been said.
\smallskip

Using $\frac{1}{12} \partial f = \theta f -\frac{k}{12} E_2 f$ as well as the transformation property of $E_2$ given by
\begin{equation}
\label{E-two}
(E_2 \mid_2 \gamma)(z) = E_2(z) + \frac{12}{2\pi i} c (cz+d)^{-1}
\end{equation}
for $\gamma = \begin{pmatrix} a & b \\ c & d \end{pmatrix} \in \SL_2(\Z)$ (see for instance \cite{ds}), a computation shows that we have $\frac{1}{12} \partial \diam{\ell}_k f = \diam{\ell}_{k+2} \frac{1}{12} \partial f$ for $f \in M_k(N,\C)$.

Now recall the above definition of $\theta_{p^m} : M_k(N,\Zmodpm{p^m}) \rightarrow M_{k + k(m)}(N,\Zmodpm{p^m})$, as well as the fact that the forms $E_{p-1}$ and the $f_j$ occurring in the definition all have level $1$ and thus are fixed under the action of the operators $\diam{\ell}_{p-1}$ and $\diam{\ell}_{k_j}$, respectively. We deduce that:
$$
\theta_{p^m} \diam{\ell}_k f = \diam{\ell}_{k+k(m)} \theta_{p^m} f
$$
for all $f \in M_k(N,\Zmodpm{p^m})$.

Let now $f = \sum a_n q^n \in M_k(N,\Zmodpm{p^m})$. Then we have (with the usual convention that $a_{\frac{n}{\ell}} := 0$ if $\ell \nmid n$):
\begin{eqnarray*}
  \diam{\ell}_k f &=& \sum b_n q^n \\
  T_\ell f &=&  \sum \left( a_{\ell n} + \ell^{k-1} b_{\frac{n}{\ell}} \right) q^n \\
  \diam{\ell}_{k+k(m)} \theta_{p^m} f &=& \theta_{p^m} \diam{\ell}_k f = \sum n b_n q^n \\
  T_\ell \theta_{p^m} f &=& \sum \left( \ell n a_{\ell n} + \ell^{k+k(m)-1} \frac{n}{\ell} b_{\frac{n}{\ell}} \right) q^n \\
  &=&  \sum \ell n \left( a_{\ell n} + \ell^{(k-1) + k(m)-2} b_{\frac{n}{\ell}} \right) q^n \\
  \ell \theta_{p^m} T_\ell f &=& \sum \ell n \left( a_{\ell n} + \ell^{k-1} b_{\frac{n}{\ell}} \right) q^n.
\end{eqnarray*}
For $\ell \mid N$ a similar calculation holds (the second term involving $b_{\frac{n}{\ell}}$ is omitted throughout).

Thus, if $\ell \neq p$ we have that
$$
T_\ell \theta_{p^m} f = \ell \theta_{p^m} T_\ell f
$$
for all $f \in M_k(N,\Zmodpm{p^m})$ using the fact that $k(m)-2 = 2p^{m-1}(p-1)$ is divisible by $p^{m-1}(p-1)$ so that
$$
\ell^{k(m)-2} \equiv 1 \pmod{p^m}.
$$

Finally, if $\ell = p$ and $k \ge m$, we still have the desired identity as the discrepancy between the right hand side and left hand side is divisible by $\ell n \cdot \ell^{k-1}$ and hence is congruent to $0$ modulo $p^m$.

\noindent {\it (iii)} Now suppose that $m\ge 2$, that $f \in M_k(N,\Zmodpm{p^m})$ with $f \not\equiv 0 \pmod p$, and suppose further that $p\nmid k$ and $w_p(f)= k$. These hypotheses imply that $w_p(\theta f) = k + p + 1$ whence in particular that $\theta f \not\equiv 0 \pmod{p}$ (cf.\ \cite{serre_modp} and \cite{swinnerton-dyer} for level one, \cite{katz_modp} for higher levels).

Assume that we had $w_{p^m}(\theta f) < k + 2 + 2p^{m-1}(p-1) = k+k(m)$, i.e., that there exists a form $g\in M_{k'}(N,\Zmodpm{p^m})$ with $g=\theta f$ as forms with coefficients in $\Zmodpm{p^m}$, and where $k'<k + k(m)$. As $\theta f \not\equiv 0 \pmod{p}$ we can then deduce that
$$
k' \equiv k + k(m) \pmod{p^{m-1}(p-1)}
$$
by classical results due to Serre and Katz, cf.\ specifically Corollary 4.4.2 of \cite{katz_padic}. We use the fact that modular forms in $M_k(N,\Z/p^m\Z)$ can be lifted to classical modular forms over $\Z_p$ (and can in fact be lifted to $\Z$) and that $N$ is prime to $p$. For a generalization that applies in situations where the forms have more general coefficients, see \cite[Theorem B]{queen}. This generalization is needed for the proof of Corollary \ref{main-cor} since that corollary applies to forms with coefficients in $\Zmod{p^m}$.

Hence, let us write $k+k(m) = k'+t\cdot p^{m-1}(p-1)$ with $t\ge 1$. Putting
$$
h := E_{p-1}^{p^{m-1}(t-1)} g
$$
we find that
$$
\theta f = E_{p-1}^{p^{m-1}} h
$$
as an equality of forms in $M_{k+k(m)}(N,\Zmodpm{p^m})$. If we combine this with \eqref{theta-defn} and \eqref{theta-defn1} above, we obtain:
$$
2kp^{m-1} E_{p-1}^{t_{m-1}} f_{m-1} f = -E_{p-1}^{p^{m-1}} h + \frac{1}{12} E_{p-1}^{2p^{m-1}} \partial f - 2k f \sum_{j=0}^{m-2} p^j E_{p-1}^{p^{m-j-1} t_j} f_j .
$$

If we now use the fact that $p\nmid k$, that $p$ is odd, and that, as is easily checked, we have
$$
t_{m-1} = (k(m)-k_{m-1})/(p-1)  = (p^m - 3 p^{m-1} + 2)/(p-1)< p^{m-1} ,
$$
as well as $t_{m-1} < p^{m-j-1} t_j$ for $j=0,\ldots, m-2$, we deduce that
$$
p^{m-1} E_{p-1}^{t_{m-1}} f_{m-1} f = E_{p-1}^{t_{m-1}+1} h'
$$
for some $h' \in M_{k+k(m)-(p-1)(t_{m-1}+1)}(N,\Zmodpm{p^m})$. Hence we must have
$$
h' \in p^{m-1} M_{k+k(m)-(p-1)(t_{m-1}+1)}(N,\Zmodpm{p^m}) ,
$$
say $h'=p^{m-1}h''$, so that
$$
E_{p-1}^{t_{m-1}} f_{m-1} f \equiv E_{p-1}^{t_{m-1}+1} h'' \pmod{p} ,
$$
and hence
$$
f_{m-1} f \equiv E_{p-1} h'' \pmod{p} .
$$

It follows that
$$
w_p(f_{m-1} f) < k+k(m) - t_{m-1}(p-1) = k+k_{m-1} = k+p^{m-1}(p+1) .
$$

Now recall (from Proposition \ref{prop:G_2_modpm}) that $f_{m-1} = G_{p+1}^{p^{m-1}}$. As $G_{p+1} = -\frac{B_{p+1}}{2(p+1)} E_{p+1}$ with $\frac{B_{p+1}}{2(p+1)}$ invertible modulo $p$, we deduce
$$
w_p(E_{p+1}^{p^{m-1}} f) < k+p^{m-1}(p+1) .
$$

However, as $w_p(f) = k$ by assumption (and as $p\nmid k$ so that, in particular, $k\neq p$), this conclusion contradicts Lemma \ref{lemma:filtration_e_p+1_phi} below.
\end{proof}

\begin{lem}\label{lemma:filtration_e_p+1_phi} Suppose that $p\neq \kappa\in\N$ and that $0\neq \phi \in M_\kappa(N,\Z/p\Z)$ with $w_p(\phi) = \kappa$. Then, for $a\in \N$ we have:
$$
w_p(E_{p+1}^a \phi) = w_p(\phi) + a (p+1) .
$$
\end{lem}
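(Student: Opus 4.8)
The plan is to reduce the statement to the single assertion that $E_{p+1}^a\phi$ is \emph{not divisible by the Hasse invariant} $A := E_{p-1}$, and then to read this off geometrically using the two facts from \cite{katz_modp} quoted above: that $A$, viewed as a mod $p$ form of weight $p-1$ in the sense of Katz (i.e.\ as a section of $\omega^{\otimes(p-1)}$ on the reduction of the modular curve for $\Gamma_1(N)$), has only simple zeros, located exactly at the supersingular points, and that $E_{p+1}$ does not vanish at any of these points. The upper bound $w_p(E_{p+1}^a\phi)\le \kappa + a(p+1)$ is immediate, since $E_{p+1}^a\phi$ is literally the reduction of a weight $\kappa + a(p+1)$ form: lift $\phi$ to a form over $\Z_p$ and multiply by $E_{p+1}^a\in M_{a(p+1)}(\SL_2(\Z),\Z)$. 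All of the work is in the matching lower bound.

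First I would recall the dictionary between the filtration $w_p$ and divisibility by $A$. Because $A\equiv 1$ on $q$-expansions, multiplication by $A$ does not change the underlying mod $p$ form, so a nonzero form $g$ of weight $w$ satisfies $w_p(g) < w$ precisely when, as a section of $\omega^{\otimes w}$, it is divisible by $A$; and since $A$ has only simple zeros at the supersingular points, this in turn happens precisely when $g$ vanishes at every supersingular point. This is the standard filtration theory of Serre and Swinnerton-Dyer in the geometric formulation of Katz (cf.\ \cite{katz_modp} and \cite{jochnowitz}); the hypothesis $\kappa\neq p$ enters exactly here, guarding against the delicate boundary case in which dividing $\phi$ by $A$ would drop the weight down to $1$, where the comparison between reductions of classical forms and Katz forms is no longer transparent.

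With the dictionary in hand the argument is short. Since $w_p(\phi)=\kappa$, the form $\phi$ is not divisible by $A$, so there is a supersingular point $x_0$ with $\phi(x_0)\neq 0$. As $E_{p+1}$ has no zero in common with $A$, we have $E_{p+1}(x_0)\neq 0$, whence
$$
(E_{p+1}^a\phi)(x_0) = E_{p+1}(x_0)^a\,\phi(x_0) \neq 0 .
$$
Thus $E_{p+1}^a\phi$ does not vanish at $x_0$, so it is not divisible by $A$, and therefore its filtration equals its weight:
$$
w_p(E_{p+1}^a\phi) = \kappa + a(p+1) = w_p(\phi) + a(p+1),
$$
as claimed. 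Note that here the final application of the dictionary is at weight $\kappa + a(p+1) > p$, so the excluded weight plays no role on this side.

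The hard part is the dictionary itself rather than the computation: one must justify, at level $N$ prime to $p$, that the filtration defined through reductions of classical forms coincides with the order of divisibility by the Hasse invariant of the associated Katz section, and one must control the single degenerate weight. This is precisely why Katz's level-$N$ results on the simple zeros of $A$ and on its disjointness from the zeros of $E_{p+1}$ are invoked, and why the weight $\kappa=p$ \dash--- adjacent to the problematic weight $1$ \dash--- is excluded from the statement.
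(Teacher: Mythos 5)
Your core computation is sound and is, in substance, the same argument as the paper's, run in the contrapositive: the paper assumes $w_p(E_{p+1}\phi) < \kappa + p + 1$, writes $E_{p+1}\phi = E_{p-1}\psi$, and uses exactly your two geometric inputs (Igusa: $E_{p-1}$ has simple zeros, located at the supersingular points; Katz: $E_{p-1}$ and $E_{p+1}$ have no common zero) to conclude $\phi = E_{p-1}\eta$ with $\eta$ classical of weight $\kappa-(p-1)$, contradicting $w_p(\phi)=\kappa$. Your version evaluates at a single supersingular point instead, which handles general $a$ without the paper's induction; and you deploy the hypothesis $\kappa \neq p$ in the right place (it keeps the quotient weight $\kappa-(p-1)$ away from $1$, where the comparison between reductions of classical forms and Katz forms breaks down).

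There is, however, a genuine gap, and it sits precisely where you defer to ``standard filtration theory'': your dictionary between $w_p$ (defined via reductions of classical forms) and divisibility by the Hasse invariant of the associated Katz section rests on comparison isomorphisms of the type $\mathcal{M}_k(N,\Z/p\Z) \cong M_k(N,\Z/p\Z)$ for $k>1$, $p\nmid N$, and these --- as quoted from Diamond--Im in the paper, and as forced by representability of the $\Gamma_1(N)$ moduli problem --- require $N \ge 5$. Your proof as written is therefore silently restricted to $N \ge 5$. For $N \le 4$, and in particular for the basic case $N=1$, neither Katz's results (stated for $\Gamma(N)$ with $N \ge 3$) nor the Diamond--Im comparison apply directly, and ``evaluate the section at a supersingular point'' is not available off the shelf. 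The paper spends roughly half of its proof on exactly this case: it chooses an auxiliary prime $q \ge 5$ with $q \nmid Np$ and $p \nmid q^2-1$, runs the geometric argument at level $Nq$, and then descends back to level $N$ by a trace argument (whose $p$-integrality itself requires justification via Deligne--Rapoport/Katz), using that $[\Gamma_1(N):\Gamma_1(Nq)] = q^2-1$ is prime to $p$ to untwist the trace. Without this reduction, or some substitute for it, your argument does not prove the lemma at small levels, so it is incomplete as it stands.
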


\begin{proof} By induction on $a$ it is clearly enough to prove the case $a=1$. Hence, let us assume $a=1$.

Assume for a contradiction that we had $w_p(E_{p+1} \phi) < \kappa + p+1$. Then
$$
E_{p+1} \phi = E_{p-1} \psi
$$
for some $\psi \in M_{2+\kappa}(N,\Z/p\Z)$: put $k := w_p(E_{p+1} \phi)$, and let $g\in S_k(N,\Z/p\Z)$ be such that $g$ and $E_{p+1} \phi$ have identical $q$-expansions. Then by the classical result of Serre and Katz (Corollary 4.4.2 of \cite{katz_padic}) that we already used above, it follows that $k \equiv \kappa + p+1 \pmod{p-1}$. Writing $\kappa + p+1 = k + t\cdot (p-1)$ with $t\in \N$ we also have $E_{p+1} \phi = g\cdot E_{p-1}^t$. Putting $\psi = g\cdot E_{p-1}^{t-1}$ we have the claim.
\smallskip

Now suppose first that $N \ge 5$. Let $\mathcal{M}_k(N,R)$ denote the space of modular forms of weight $k$ on $\Gamma_1(N)$ over $R$ as defined in \cite{di} using Katz's definition. One has an injection
$$
M_k(N,R) \rightarrow \mathcal{M}_k(N,R)
$$
sending classical modular forms over $R$ to Katz modular forms over $R$.

Under the hypothesis $N \ge 5$, we have from \cite[Theorem 12.3.7]{di}) that:
\begin{enumerate}
\item[(K)] $\mathcal{M}_k(N,\Z/p\Z) \cong M_k(N,\Z/p\Z)$ if $k \neq 1$ and $N$ is prime to $p$.
\end{enumerate}

We note that $k=0$ is allowed in (K), and we will need that below.



Regard the above identity as an identity of Katz modular forms on $\Gamma_1(N)$ over $\Z/p\Z$ and let us use some results from \cite{katz_modp}: By the remark after Lemma 1 of \cite{katz_modp}, the forms $E_{p-1}$ and $E_{p+1}$ are without a common zero (the results in \cite{katz_modp} are for modular forms on $\Gamma(N)$ which implies the result for $\Gamma_1(N)$. This result is also proved in \cite[Theorem 3.1]{katz_zannier} directly for $E_{p-1}$ and $E_{p+1}$ as forms on $\SL_2(\Z)$.)

Hence the equality $E_{p+1} \phi = E_{p-1} \psi$ means that $\phi$ vanishes at every zero of $E_{p-1}$ to at least the order that $E_{p-1}$ vanishes to at that zero. (In fact, by a theorem of Igusa \cite{igusa}, $E_{p-1}$ vanishes to order $1$ at every supersingular point of $X_1(N)$, see \cite[second paragraph following (4.6)]{gross}, but this additional information is irrelevant for our deduction).

Thus, we must have $\kappa \ge p-1$ and $\phi = E_{p-1} \eta$ for some $\eta \in \mathcal{M}_{\kappa-(p-1)}(N,\Z/p\Z)$.

By hypothesis we have $\kappa-(p-1) \neq 1$ and so by (K) above we have that $\eta$ is classical in the sense that $\eta \in M_{\kappa-(p-1)}(N,\Z/p\Z)$. But then $w_p(\phi) \le \kappa - (p-1)$, contrary to hypothesis.
\smallskip

Suppose then that $N\le 4$. We can reduce to the previous case as follows. Choose an auxiliary prime $q\ge 5$ such that $q \nmid Np$ and such that $p \nmid t:= q^2-1$, and regard $\phi$ as a Katz modular form on $\Gamma_1(Nq)$. The above argument then shows that under the assumption $w_p(E_{p+1}\phi) < \kappa + p+1$ we will have $\phi = E_{p-1} \eta$ for some $\eta \in M_{\kappa-(p-1)}(Nq,\Z/p\Z)$.

Choose forms $\tilde\phi \in M_{\kappa}(N,\Z_p)$ and $\tilde\eta \in M_{\kappa-(p-1)}(Nq,\Z_p)$ that reduce to $\phi$ and $\eta$, respectively. Also, let $\zeta$ be a primitive $Nq$'th root of unity, and let $O$ be the ring of integers of $\Q(\zeta)$ completed at a prime $\p$ above $p$.

Let $\gamma_1,\ldots,\gamma_t$ be a set of right coset representatives for $\Gamma_1(Nq)$ in $\Gamma_1(N)$. We then have the trace map
$$
f \mapsto \Tr f := \sum_{i=1}^t f\mid_k \gamma_i
$$
as a linear map $M_k(Nq,\C) \rightarrow M_k(N,\C)$ for any $k$. However, $\Tr$ as defined above in fact also defines a linear map $M_k(Nq,O) \rightarrow M_k(N,O)$: This follows as we know that if $f \in M_k(Nq,O)$ is a form for which the $q$-expansion (at $\infty$) has coefficients in $O$ then the same is true for $f\mid_k \gamma$ for any $\gamma\in\SL_2(\Z)$. By our definition of $M_k(Nq,O)$, it suffices to know that if $f \in M_k(\Gamma(Nq),\C)$ has $q$-expansion with coefficients in $\Z[1/n][\zeta]$, then $f \mid_k \gamma$ also has the same property for any $\gamma \in \SL_2(\Z)$. This latter fact is stated in \cite[Chap.\ VII, \S 4.8]{delrap} as a consequence of \cite[Chap.\ VII, Corollaire 3.13]{delrap}.

Taking the trace we find the congruence $t\tilde \phi \equiv E_{p-1} (\Tr \tilde \eta) \pmod{\p}$ as a congruence between forms in $M_{\kappa}(N,O)$ (we used that $\tilde \phi$ and $E_{p-1}$ are on $\Gamma_1(N)$). Since $t = [\Gamma_1(N) : \Gamma_1(Nq)] = q^2-1$ is prime to $p$, the congruence shows that $\phi$ coincides with an element in $M_{\kappa-(p-1)}(N,O/\p)$. However, because $M_{\kappa-(p-1)}(N,\Z/p\Z) \otimes O/\p \cong M_{\kappa-(p-1)}(N,O/\p)$ and $\phi \in M_{\kappa-(p-1)}(N,O/\p)$ has $q$-expansion lying in $\Z/p\Z[[q]]$ and $O/\p$ is free over $\Z/p\Z$, it follows that $\phi$ in fact lies in $M_{\kappa-(p-1)}(N,\Z/p\Z)$. But this contradicts $w_p(\phi) = \kappa$.
\end{proof}


\begin{remark}
\label{remark:cusp_vs_modular}
We have used an ad hoc argument to deduce (K) for $N \le 4$ and $p \ge 5$ from a property of $q$-expansions of classical forms on the principal congruence groups (i.e.\ \cite[Chap.\ VII, \S 4.8]{delrap}), which is in turn a consequence of the fact that classical forms map to Katz forms in a way that is compatible with $q$-expansions and actions by $\SL_2(\Z)$ and then noting the result for Katz forms. The ad hoc argument we use is elementary and relies on an a priori weaker statement. It is possible to give a more direct and conceptual argument, for instance by using the ingredients in \cite[Theorem 1.7.1]{katz_modp} or \cite[Lemma 1.9]{edix} with auxiliary full level $n = 3$ and taking $G = \GL_2(\Z/3\Z)$-invariants (so $\#G$ is prime to $p \ge 5$), but proving the exact form of what we need in this fashion requires further elaboration.
\end{remark}

\section{Acknowledgements} We would like to thank Nadim Rustom for a useful discussion about an early version of Lemma \ref{lemma:filtration_e_p+1_phi}. We would also like to thank the anonymous referee for numerous comments and suggestions that improved the paper. I.K.\ acknowledges support by grants from the Danish Council for Independent Research, and from VILLUM FONDEN through the network for Experimental Mathematics in Number Theory, Operator Algebras, and Topology. I.C.\ acknowledges support from NSERC.


\end{document}